\newtheorem{prop}{Proposition}[section]
\newtheorem{lema}[prop]{Lemma}
\newtheorem{teo}[prop]{Theorem}
\newtheorem{corolario}[prop]{Corollary}
\newtheorem{remark}[prop]{\sc Remark}
\newcommand{\supp}{\mbox{supp}}
\newcommand{\uno}{1\!\!1}
\title[Decomposition of valuations]{A Jordan-like decomposition theorem for valuations on star bodies}
\author{Pedro Tradacete}
\address{Mathematics Department\\ Universidad Carlos III de Madrid \\  28911 Legan\'es (Madrid). Spain.}
\email{ptradace@math.uc3m.es }
\thanks{Support of Spanish MINECO under grants MTM2012-31286 and MTM2013-40985 is gratefully acknowledged.}
\author{Ignacio Villanueva}
\address{Departamento de An\'alisis Matem\'atico \\
Facultad de Matem\'aticas \\ Universidad Complutense de Madrid \\
Madrid 28040}
\email{ignaciov@mat.ucm.es}
\thanks{Partially supported by grants MTM2014-54240-P, funded by MINECO and QUITEMAD+-CM, Reference: S2013/ICE-2801, funded by Comunidad de Madrid}
\begin{document}

\begin{abstract}
We show  that every radial continuous valuation $V:\mathcal S_0^n\rightarrow \mathbb R$ defined on the $n$-dimensional star bodies $\mathcal S_0^n$, and verifying $V(\{0\})=0$, can be decomposed as a sum $V=V^+-V^-$, where both $V^+$ and $V^-$ are positive radial continuous valuations on $\mathcal S_0^n$ with $V^+(\{0\})=V^-(\{0\})=0$.

As an application, we show that  radial continuous rotationally invariant valuations $V$ on $\mathcal S_0^n$  can be characterized as the applications on star bodies which can be written as  $$V(K)=\int_{S^{n-1}}\theta(\rho_K)dm,$$ where $\theta:[0,\infty)\rightarrow \mathbb R$ is a continuous function, $\rho_K$ is the radial function associated to $K$ and $m$ is the Lebesgue measure on $S^{n-1}$.

This completes recent work of the second named author, where an analogous result is proved for the case of {\em positive} radial continuous rotationally invariant valuations.
\end{abstract}

\subjclass[2010]{52B45, 52A30}

\keywords{Convex geometry; Star bodies; Valuations}

\maketitle

\section{Introduction}

This note is devoted to the study of valuations on star bodies. A valuation is a function $V$, defined on a class of sets, with the property that
$$
V(A\cup B)+V(A\cap B)=V(A)+V(B).
$$
As a generalization of the notion of measure, valuations have become a relevant area of study in Convex Geometry. In fact, this notion played a critical role in M. Dehn's solution to Hilbert's third problem, asking whether an elementary definition for volume of polytopes was possible. See, for instance, \cite{Lu1}, \cite{Lu2} and the references there included for a  broad vision of the field.

In \cite{Had}, H. Hadwiger provided a characterization of continuous rotationally and translationally invariant valuations on convex bodies as linear combinations of the quermassintegrals. In \cite{Alesker}, S. Alesker studied the  valuations on convex bodies which are only rotationally invariant.

Valuations on convex bodies belong to the Brunn-Minkowski Theory. This theory has been extended in several important ways, and in particular, to the dual Brunn-Minkowski Theory, where convex bodies, Minkowski addition and Hausdorff metric  are replaced by star bodies, radial addition and  radial metric, respectively. The dual Brunn-Minkowski theory, initiated in \cite{Lut_mv1}, has been broadly developed and successfully applied to several areas, such as integral geometry,  local theory of Banach spaces and geometric tomography (see \cite{DGP}, \cite{Gabook} for these and other applications). In particular, it played a key role in the solution of the Busemann-Petty problem \cite{Ga1}, \cite{Ga2}, \cite{Zh}.

D. A. Klain initiated in \cite{Klain96}, \cite{Klain97} the study of rotationally invariant valuations on a certain class of star sets, namely those whose radial function is $n$-th power integrable.

In \cite{Vi}, the second named author initiated the study of valuations on star bodies. In this note we continue this study showing that every continuous valuation $V:\mathcal S_0^n\longrightarrow \mathbb R$ on the $n$-dimensional star bodies can be decomposed as the difference of two positive continuous valuations.

The precise result is

\begin{teo}\label{main}
Let $V:\mathcal S_0^n\longrightarrow \mathbb R$ be a radial continuous valuation on the $n$-dimensional star bodies $\mathcal S_0^n$ such that $V(\{0\})=0$. Then, there exist two radial continuous valuations $V^+, V^-:\mathcal S_0^n\longrightarrow \mathbb R_+$ such that $V^+(\{0\})=V^-(\{0\})=0$ and such that
$$
V=V^+-V^-.
$$
Moreover, if $V$ is rotationally invariant, so are $V^+$ and $V^-$.
\end{teo}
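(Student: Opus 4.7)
My plan is to imitate the classical Jordan decomposition of signed measures within the distributive lattice of star bodies. Under the identification $K\leftrightarrow\rho_K$, the space $\mathcal S_0^n$ is the positive cone of $C(S^{n-1})$, $K\cup L$ and $K\cap L$ correspond to the pointwise maximum and minimum of radial functions, and the valuation axiom becomes the modular identity $V(K\cup L)+V(K\cap L)=V(K)+V(L)$, with radial continuity being continuity in the uniform norm. The natural candidate for the positive part is
$$V^+(K):=\sup\{V(L):L\in\mathcal S_0^n,\ L\subseteq K\},\qquad V^-(K):=V^+(K)-V(K).$$
Once $V^+$ is finite one has $V^+(K)\geq V(\{0\})=0$, $V^+\geq V$ and hence $V^-\geq 0$, $V^+(\{0\})=V^-(\{0\})=0$ since $L\subseteq\{0\}$ forces $L=\{0\}$, and $V^+$ is monotone with respect to inclusion.

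The principal obstacle is proving finiteness of $V^+(K)$ for every $K$. This is a genuine issue: the order interval $\{L\in\mathcal S_0^n:\rho_L\leq\rho_K\}$ is bounded but not equicontinuous in $C(S^{n-1})$ and therefore not relatively compact in the radial metric, so continuity of $V$ alone does not deliver boundedness on it. I would argue by contradiction, assuming $V(L_j)\to+\infty$ along some sequence $L_j\subseteq K$ and exploiting the modular identity $V(L_j\cup L_k)+V(L_j\cap L_k)=V(L_j)+V(L_k)$ to either extract a monotone subsequence $\widetilde L_j\subseteq K$ with $\rho_{\widetilde L_j}$ converging uniformly to some $\rho_{L^*}\leq\rho_K$ while $V(\widetilde L_j)\to+\infty$, contradicting continuity of $V$ at $L^*$, or to isolate bodies whose positive contribution concentrates on shrinking subsets of $S^{n-1}$ and then superpose them in a controlled way into a single uniformly convergent sequence on which $V$ still blows up. This is the step I expect to require the most technical work.

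With finiteness in hand, the valuation property of $V^+$ would be verified through a standard double inequality. The easy direction follows from pairing near-optimal bodies $K'\subseteq K$ and $L'\subseteq L$, noting $K'\cup L'\subseteq K\cup L$ and $K'\cap L'\subseteq K\cap L$, and applying the modular identity for $V$. The reverse inequality rests on the distributive identity $M=(M\cap K)\cup(M\cap L)$ valid for $M\subseteq K\cup L$, combined again with modularity of $V$. Radial continuity of $V^+$, and hence of $V^-$, is then obtained by first establishing continuity of $V^+$ along monotone sequences $L_j\uparrow L$ and $L_j\downarrow L$, using the continuity of $V$ applied to sections of near-optimal bodies, and then sandwiching a general radial limit via $K\cap K_m\subseteq K_m\subseteq K\cup K_m$ together with the monotonicity of $V^+$. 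Finally, rotational invariance of $V^\pm$ is immediate from the definition: for any $\phi\in O(n)$, $L\mapsto\phi L$ is a $V$-preserving order isomorphism of $\{L:L\subseteq K\}$ onto $\{L:L\subseteq\phi K\}$, so the defining supremum is $\phi$-invariant.
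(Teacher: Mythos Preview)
Your definition of $V^+$ and the overall architecture coincide with the paper's, and you correctly isolate finiteness of the supremum as a nontrivial step (the paper proves this as a separate lemma: radial continuous valuations are bounded on bounded sets). The easy inequality $\tilde V^+(f_1)+\tilde V^+(f_2)\le \tilde V^+(f_1\vee f_2)+\tilde V^+(f_1\wedge f_2)$ is also handled exactly as you describe.

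The gap is in the reverse inequality. Your plan is to take $M\subset K\cup L$ and $N\subset K\cap L$ near-optimal and use $M=(M\cap K)\cup(M\cap L)$ together with modularity. If you carry this out you get, with $g\le f_1\vee f_2$ and $h\le f_1\wedge f_2$,
\[
\tilde V(g)+\tilde V(g\wedge f_1\wedge f_2)=\tilde V(g\wedge f_1)+\tilde V(g\wedge f_2)\le \tilde V^+(f_1)+\tilde V^+(f_2),
\]
but the left side is \emph{not} $\tilde V(g)+\tilde V(h)$: the near-optimal $h$ for $\tilde V^+(f_1\wedge f_2)$ has no reason to be $g\wedge f_1\wedge f_2$, and any attempt to pair a general $h$ with $g$ via candidates like $p=(g\wedge f_1)\vee h$, $q=(g\wedge f_2)\vee h$ leaves an uncontrolled error term $\tilde V(g\wedge f_1\wedge f_2)-\tilde V(g\wedge h)$, which has no sign for a signed $V$. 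In the Boolean setting this obstacle disappears because one can split $M$ disjointly along $K\setminus L$ and $L$; in the cone $C(S^{n-1})^+$ there are no complements, so distributivity plus modularity alone does not close the argument.

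The paper supplies the missing idea: a \emph{rims lemma} showing that $\sup\{|\tilde V(f)|:f\prec A_\omega,\ \|f\|_\infty\le\lambda\}\to 0$ as $\omega\to 0$, where $A_\omega$ is the open $\omega$-band around $\partial A$. With $A=\{f_1\ge f_2\}$, $B=\{f_1<f_2\}$, one localizes $g$ and $h$ via a two-set partition of unity $\varphi_1\prec J(A,\omega)$, $\varphi_2\prec B$, forming $g_1',g_2',h_1,h_2$; then $g_1'\vee h_2\le f_1$, $g_2'\vee h_1\le f_2$, while all cross-meets are supported in $A_\omega$ and hence have $|\tilde V|<\epsilon$. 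This produces the reverse inequality up to $7\epsilon$. The rims lemma thus plays the role that complementation plays in the Boolean case, and it is itself proved from the boundedness-on-bounded-sets lemma by a disjoint-support accumulation argument.

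A smaller point: your continuity scheme via monotone sequences and the sandwich $K\cap K_m\subset K_m\subset K\cup K_m$ needs care, since $K\cup K_m$ and $K\cap K_m$ are not monotone in $m$; and even after replacing them by the monotone envelopes $\rho_K\pm\epsilon_m$, continuity from above of $\tilde V^+$ requires a \emph{uniform} estimate $|\tilde V(g)-\tilde V(g\wedge f)|$ over all $g\le f_m$, which continuity of $\tilde V$ alone does not give on a non-compact family. The paper sidesteps this with a short direct argument: for $g\le f$ near-optimal, write $\tilde V(g)=\tilde V(g\wedge f_0)+\tilde V(g\vee f_0)-\tilde V(f_0)$ and note $\|g\vee f_0-f_0\|_\infty\le\|f-f_0\|_\infty$, so continuity of $\tilde V$ at the single point $f_0$ suffices.
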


With this structural result at hand, the study of  continuous valuations on star bodies reduces to the simpler case of positive continuous valuations.

As an application, we can complete the main result of \cite{Vi}. In that paper, {\em positive} rotationally invariant continuous valuations $V$ on the star bodies of $\mathbb R^n$, satisfying that $V(\{0\})=0$ are  characterized by an integral representation as in Corollary  \ref{representacion} below. The question of whether a similar description is valid for the case of real-valued (not necessarily positive or negative) continuous rotationally invariant valuations was left open.

Now, Theorem \ref{main} allows us to give a positive answer to this question:

\begin{corolario}\label{representacion}
Let $V:\mathcal S_0^n\longrightarrow \mathbb R$ be a rotationally invariant radial continuous valuation on the $n$-dimensional star bodies $\mathcal S_0^n$. Then, there exists a continuous function $\theta:[0,\infty) \longrightarrow \mathbb R$ such that, for every $K\in \mathcal S_0^n$,
$$V(K)=\int_{S^{n-1}} \theta(\rho_K(t)) dm(t),$$
where $\rho_K$ is the radial function of $K$ and $m$ is the Lebesgue measure on $S^{n-1}$ normalized so that $m(S^{n-1})=1$.

Conversely, let $\theta:\mathbb R^+\longrightarrow \mathbb R$ be a continuous function. Then the application  $V:\mathcal S_0^n \longrightarrow \mathbb R$ given by
$$V(K)=\int_{S^{n-1}} \theta(\rho_K(t)) dm(t)$$ is a radial continuous rotationally invariant valuation.
\end{corolario}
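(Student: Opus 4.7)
The plan is to derive this statement as a near-immediate consequence of Theorem \ref{main} combined with the main result of \cite{Vi}, which already handles the positive case. I will treat the direct implication first, then comment briefly on the converse.

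First I would reduce to the case $V(\{0\})=0$. The constant functional $K\mapsto V(\{0\})$ is itself a radial continuous rotationally invariant valuation (the valuation identity $c+c=c+c$ is trivial), so the difference $\tilde V(K):=V(K)-V(\{0\})$ remains a radial continuous rotationally invariant valuation on $\mathcal S_0^n$, and now satisfies $\tilde V(\{0\})=0$. Applying Theorem \ref{main} to $\tilde V$ gives a decomposition $\tilde V=V^+-V^-$ where $V^+,V^-\colon\mathcal S_0^n\to\mathbb R_+$ are radial continuous valuations with $V^\pm(\{0\})=0$, and, by the ``moreover'' clause of the theorem, rotationally invariant. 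I can then invoke the characterization from \cite{Vi} for positive radial continuous rotationally invariant valuations: there exist continuous functions $\theta^+,\theta^-\colon[0,\infty)\to\mathbb R_+$ with $\theta^\pm(0)=0$ such that
$$V^\pm(K)=\int_{S^{n-1}}\theta^\pm(\rho_K(t))\,dm(t).$$
Subtracting and reincorporating the additive constant yields $V(K)=\int_{S^{n-1}}\theta(\rho_K(t))\,dm(t)$ with $\theta(r):=V(\{0\})+\theta^+(r)-\theta^-(r)$, which is continuous on $[0,\infty)$. Here I use that $m$ is a probability measure, so the constant $V(\{0\})$ can be absorbed into the integrand as $\theta(0)$.

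For the converse I would verify the three required properties pointwise from standard facts about radial functions of star bodies. The identities $\rho_{K\cup L}=\max(\rho_K,\rho_L)$ and $\rho_{K\cap L}=\min(\rho_K,\rho_L)$ imply that $\theta(\rho_{K\cup L})+\theta(\rho_{K\cap L})=\theta(\rho_K)+\theta(\rho_L)$ pointwise on $S^{n-1}$, so integration gives the valuation identity. Rotational invariance of $V$ follows from $\rho_{\phi K}=\rho_K\circ\phi^{-1}$ together with the rotational invariance of $m$. Finally, if $\rho_{K_j}\to\rho_K$ in the radial metric (i.e.\ uniformly on $S^{n-1}$), the uniform bound on $\rho_{K_j}$ and the uniform continuity of $\theta$ on the relevant compact interval yield $\theta\circ\rho_{K_j}\to\theta\circ\rho_K$ uniformly, and dominated convergence gives $V(K_j)\to V(K)$.

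I do not anticipate a genuine obstacle: all the analytic difficulty is packaged inside Theorem \ref{main} (whose proof is the body of this paper) and inside the positive case proved in \cite{Vi}. The only small bookkeeping point is the handling of $V(\{0\})$, which need not vanish in the hypotheses of the corollary but is forced to vanish in order to apply Theorem \ref{main}; this is dealt with by the trivial reduction above.
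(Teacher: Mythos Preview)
Your proposal is correct and follows essentially the same route as the paper: decompose via Theorem \ref{main}, apply \cite[Theorem 1.1]{Vi} to each positive piece, and set $\theta=\theta^+-\theta^-$; the converse is likewise referred back to \cite{Vi}. Your explicit reduction to the case $V(\{0\})=0$ before invoking Theorem \ref{main} is in fact a point the paper's own proof glosses over, so your bookkeeping here is a small improvement rather than a deviation.
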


\smallskip

As in \cite{Vi}, the function $\theta$ in Corollary \ref{representacion} is nothing but $\theta(\lambda)=V(\lambda S^{n-1})$.

\smallskip

The rest of the paper is structured as follows:
In Section \ref{sectionnotation} we describe our notation and some known facts that we will need.
In Section \ref{results} we prove Theorem \ref{main} and Corollary \ref{representacion}

\section{Notation and known facts}\label{sectionnotation}

A set $L\subset \mathbb R^n$ is a {\em star set} if it contains the origin and every line through $0$ that meets $L$ does so in a (possibly degenerate) line segment. Let $\mathcal S^n$  denote the set of the star sets of $\mathbb R^n$.

Given $L\in \mathcal S^n$, we define its {\em radial function} $\rho_L$ by
$$
\rho_L(t)=  \sup \{c\geq 0 \, : \, ct\in L\},
$$
for each $t\in\mathbb R^n$. Clearly, radial functions are completely characterized by their restriction to $S^{n-1}$, the euclidean unit sphere in $\mathbb R^n$, so from now on we consider them defined on $S^{n-1}$.

A star set $L$ is called a {\em star body} if $\rho_L$ is continuous. Conversely, given a positive continuous function $f:S^{n-1}\longrightarrow \mathbb R^+=[0,\infty)$ there exists a star body $L_f$ such that $f$ is the radial function of $L_f$.
We denote by $\mathcal S_0^n$ the set of $n$-dimensional star bodies and we denote by $C(S^{n-1})^+$ the set of positive continuous functions on $S^{n-1}$.

Given two sets $K,L\in \mathcal S^n$, we define their {\em radial sum} as the star set $K\tilde{+}L$ whose radial function is $\rho_K+\rho_L$. Note that $K\tilde{+}L\in \mathcal S_0^n$ whenever $K,L\in \mathcal S_0^n$.

The dual analog for the Hausdorff metric of convex bodies is the so called {\em radial metric}, which is defined by
$$
\delta(K,L)=\inf\{\lambda\geq 0 : K\subset L\tilde{+} \lambda B_n, L\subset K\tilde{+} \lambda B_n\},
$$
where $B_n$ denotes the euclidean unit ball of $\mathbb R^n$. It is easy to check that
$$
\delta(K,L)=\|\rho_K-\rho_L\|_\infty.
$$

\smallskip

An application $V:\mathcal S_0^n\longrightarrow \mathbb R$ is a {\em valuation} if for any $K,L\in\mathcal S_0^n$,
$$
V(K\cup L)+V(K\cap L)=V(K)+ V(L).
$$
It is clear that a linear combination of valuations is a valuation.

\smallskip

Given two functions $f_1, f_2\in C(S^{n-1})^+$, we denote their maximum and minimum by
$$
(f_1\vee f_2)(t)=\max \{f_1(t), f_2(t)\},
$$
$$
(f_1\wedge f_2)(t)=\min \{f_1(t), f_2(t)\}.
$$

Given two star bodies $K,L$, both $K\cup L$ and $K\cap L$ are star bodies, and it is easy to see that
$$
\rho_{K\cup L}=\rho_K\vee \rho_L, \hspace{1cm} \rho_{K\cap L}=\rho_K\wedge \rho_L.
$$

With this notation, a valuation $V:\mathcal S_0^n\rightarrow \mathbb R$ induces a function $\tilde V:C(S^{n-1})^+\rightarrow \mathbb R$ given by
$$
\tilde V(f)=V(L_f),
$$
where $L_f$ is the star body whose radial function satisfies $\rho_{L_f}=f$. If $V$ is continuous, then  $\tilde V$ is continuous with respect to the $\|\cdot\|_\infty$ norm in $C(S^{n-1})^+$ and satisfies
$$
\tilde V(f)+\tilde V(g)=\tilde V(f\vee g)+\tilde V(f\wedge g)
$$
for every $f,g\in C(S^{n-1})^+$. Conversely, every such function $\tilde V$ induces a radial continuous valuation on $\mathcal S_0^n$.

Given  $A\subset S^{n-1}$, we denote the closure of $A$ by $\overline{A}$.
Given  a function $f:S^{n-1} \longrightarrow \mathbb R$, we define the support of $f$ by $$supp(f)=\overline{\{t\in S^{n-1} \mbox{ such that } f(t)\not = 0\}},$$ and for any set $G\subset S^{n-1}$, we will write $f\prec G$ if $\supp(f)\subset G$. Throughout, $\uno:S^{n-1}\longrightarrow \mathbb R$ denotes the function constantly equal to 1.

For completeness, we state now a result of \cite{Vi} which will be needed later.

\begin{lema}\label{split}\cite[Lemmata 3.3 and  3.4]{Vi}
Let $\{G_i: i\in I\}$ be a family of open subsets of $S^{n-1}$. Let $G=\cup_{i\in I} G_i$. Then, for every $i\in I$ there exists a function $\varphi_i: G \longrightarrow [0,1]$ continuous in $G$ verifying  $\varphi_i\prec G_i$ and such that $\bigvee_{i\in I} \varphi_i=\uno$ in $G$. Moreover,  let  $f\in C(S^{n-1})^+$ verify $f\prec G$. Then, for every $i\in I$, the function  $f_i=\varphi_if$ belongs to $ C(S^{n-1})^+$. Also,  $f_i\prec G_i$ and  $\bigvee_{i\in I} f_i=f$. In particular, for every $i\in I$,  $0\leq f_i\leq f$.
\end{lema}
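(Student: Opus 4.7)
The plan is to derive the lemma from the classical continuous partition of unity on paracompact spaces, with two adjustments: handling an arbitrary (possibly uncountable) index set $I$, and upgrading a $\sum=1$ partition to a $\bigvee=1$ one.

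First I would note that $G$, being an open subset of the compact metric space $S^{n-1}$, is paracompact. I would then pass to a locally finite open refinement $\{W_\alpha\}_{\alpha\in A}$ of $\{G_i\}_{i\in I}$ together with a choice function $r:A\to I$ such that $W_\alpha\subset G_{r(\alpha)}$, and invoke the standard existence of a continuous partition of unity $\{\psi_\alpha\}_{\alpha\in A}$ subordinate to $\{W_\alpha\}$: $\supp\psi_\alpha\subset W_\alpha$ and $\sum_\alpha\psi_\alpha=\uno$ on $G$. To re-index by $I$ I would lump
$$\tilde\varphi_i=\sum_{\alpha\in r^{-1}(i)}\psi_\alpha,$$
with empty sums taken to be zero. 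Local finiteness makes this a locally finite sum, so $\tilde\varphi_i$ is continuous on $G$, $\supp\tilde\varphi_i\subset G_i$, $0\leq\tilde\varphi_i\leq 1$, and $\sum_{i\in I}\tilde\varphi_i=\uno$ on $G$.

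Next I would upgrade $\sum$ to $\bigvee$ by setting $M(t)=\sup_{i\in I}\tilde\varphi_i(t)$. Local finiteness ensures that near each point of $G$ the function $M$ coincides with the maximum of finitely many continuous functions, so $M$ is continuous on $G$; and $M>0$ everywhere, since $\sum_i\tilde\varphi_i=\uno$. Defining $\varphi_i=\tilde\varphi_i/M$ then gives continuous functions on $G$ with values in $[0,1]$ sharing the same supports as $\tilde\varphi_i$ (so $\varphi_i\prec G_i$); and at each $t\in G$ the supremum $M(t)$ is attained at some index $i_t$, at which $\varphi_{i_t}(t)=1$, giving $\bigvee_{i\in I}\varphi_i=\uno$ on $G$.

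Finally, for the second half, given $f\in C(S^{n-1})^+$ with $f\prec G$, I would set $f_i=\varphi_i f$ on $G$ and $f_i=0$ on $S^{n-1}\setminus G$. Continuity on $G$ is automatic; at any boundary point $t_0\in S^{n-1}\setminus G$ one has $f(t_0)=0$, and the squeeze $0\leq f_i\leq f$ forces $f_i\to 0$, so $f_i\in C(S^{n-1})^+$. The containment $\supp f_i\subset G_i$ follows from $\{f_i\neq 0\}\subset\{\varphi_i\neq 0\}$, and $\bigvee_i f_i=f$ is immediate from $\bigvee_i\varphi_i=\uno$ on $G$ together with the vanishing of both sides outside $G$. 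The main subtlety is the $\sum\to\bigvee$ upgrade: it works precisely because local finiteness of the refined cover keeps the normalizing function $M$ continuous and positive, which is the mechanism underlying the whole argument.
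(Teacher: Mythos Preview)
The paper does not prove this lemma; it is quoted from \cite[Lemmata 3.3 and 3.4]{Vi} and stated only ``for completeness,'' so there is no in-paper argument to compare against.

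Your proof is correct and follows the standard paracompactness route: a locally finite refinement with a subordinate partition of unity, lumping along a choice function $r:A\to I$ to re-index by $I$, and the normalization $\varphi_i=\tilde\varphi_i/M$ to convert $\sum=\uno$ into $\bigvee=\uno$. Local finiteness is doing all the work, exactly as you identify: it makes the lumped sums locally finite (hence continuous), it makes the union $\bigcup_{\alpha\in r^{-1}(i)}\supp\psi_\alpha$ closed (hence $\supp\tilde\varphi_i\subset G_i$), and it makes $M$ a local maximum of finitely many continuous functions (hence continuous and with its supremum attained). The extension of $f_i$ by zero together with the squeeze $0\le f_i\le f$ handles the second half cleanly. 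One small point you leave implicit and might state: when you conclude $\supp f_i\subset G_i$ with closure taken in $S^{n-1}$ (which is how the paper defines $\supp$), you are using that $\{f_i\neq 0\}\subset\{f\neq 0\}$ forces $\overline{\{f_i\neq 0\}}^{\,S^{n-1}}\subset\supp f\subset G$, so the $S^{n-1}$-closure and the $G$-closure of $\{f_i\neq 0\}$ agree, and the latter sits inside $\supp_G\varphi_i\subset G_i$.
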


\section{The results}\label{results}

To prove Theorem \ref{main} we will need to control the maximum value of $V$ on certain sets. The first step in this direction is to show that $V$ is {\em bounded on bounded sets}:

We say that a valuation $V:\mathcal S^n_0\longrightarrow \mathbb R$ is  bounded on bounded sets  if for every $\lambda>0$ there exists  a real number $K>0$ such that, for every star body $L\subset \lambda B_n$, $|V(L)|\leq K.$

Equivalently, $V$ is bounded on bounded sets if for every $\lambda>0$ there exists $K>0$ such that for every $f\in C(S^{n-1})^+$ with $\|f\|_\infty\leq \lambda$ we have $\tilde{V}(f)\leq K$.

\begin{lema}\label{l:bbs}
Every radial continuous valuation $V:\mathcal S^n_0\longrightarrow \mathbb R$ is bounded on bounded sets.
\end{lema}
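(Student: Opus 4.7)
My plan is to avoid Baire category and instead exploit compactness of the one-parameter family of constant functions together with iterated use of the valuation identity. Fix $\lambda>0$.

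First, consider the curve $\gamma:[0,\lambda]\to C(S^{n-1})^+$, $\gamma(c)=c\uno$, whose image is compact in $C(S^{n-1})$. At each $\gamma(c)$, continuity of $\tilde V$ provides a radius $\epsilon(c)>0$ with $|\tilde V(h)-\tilde V(c\uno)|\leq 1$ for every $h$ with $\|h-c\uno\|_\infty<\epsilon(c)$. Covering the compact curve $\gamma([0,\lambda])$ by finitely many balls $B(\gamma(c_i),\epsilon(c_i)/2)$ and setting $\epsilon^*=\min_i\epsilon(c_i)/2$ and $M^*=1+\max_i|\tilde V(c_i\uno)|$, a triangle-inequality argument yields a uniform tube:
$$|\tilde V(h)|\leq M^*\quad\text{whenever}\quad \|h-c\uno\|_\infty<\epsilon^*\ \text{for some}\ c\in[0,\lambda].$$

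Next, put $\delta=\epsilon^*/2$, $K=\lceil\lambda/\delta\rceil$, and $A_k=k\delta\uno$ for $k=0,\dots,K$. For any $f\in C(S^{n-1})^+$ with $\|f\|_\infty\leq\lambda$, set $X_k=f\vee A_k$; then $X_0=f$ and, since $A_K\geq\lambda\uno\geq f$, also $X_K=A_K$. Applying the valuation identity to $X_k$ and $A_{k+1}$, and using $X_k\vee A_{k+1}=X_{k+1}$ (because $A_k\leq A_{k+1}$) together with $X_k\wedge A_{k+1}=h_k:=(f\vee A_k)\wedge A_{k+1}$, we obtain
$$\tilde V(X_k)-\tilde V(X_{k+1})=\tilde V(h_k)-\tilde V(A_{k+1}).$$
Since $A_k\leq h_k\leq A_{k+1}$, one has $\|h_k-A_k\|_\infty\leq\delta<\epsilon^*$, so $h_k$ lies in the tube above and $|\tilde V(h_k)|\leq M^*$. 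Telescoping $k=0,\dots,K-1$ gives
$$\tilde V(f)=\tilde V(A_K)+\sum_{k=0}^{K-1}\bigl(\tilde V(h_k)-\tilde V(A_{k+1})\bigr),$$
and since $c\mapsto\tilde V(c\uno)$ is continuous on the compact interval $[0,K\delta]$, the quantities $|\tilde V(A_{k+1})|$ are all bounded by some $M_0<\infty$. Hence $|\tilde V(f)|\leq M_0+K(M^*+M_0)$, a bound independent of $f$.

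The central difficulty is the first step: continuity of $\tilde V$ provides only pointwise moduli $\epsilon(c)>0$ which could a priori shrink along the curve. Compactness of $\gamma([0,\lambda])$ is essential to upgrade these to a strictly positive uniform radius $\epsilon^*$; with this in hand, the iteration of the second step proceeds in a fixed finite number $K$ of steps and yields a bound independent of $f$.
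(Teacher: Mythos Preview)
Your argument is correct. Both proofs rest on the same two ingredients---continuity of $c\mapsto\tilde V(c\uno)$ on compact intervals, and the valuation identity $\tilde V(f\vee c\uno)+\tilde V(f\wedge c\uno)=\tilde V(f)+\tilde V(c\uno)$---but you deploy them differently. The paper argues by contradiction: assuming $|\tilde V(f_i)|\to\infty$ with $\|f_i\|_\infty\le\lambda$, it repeatedly bisects the range $[0,\lambda]$ via the valuation identity with the midpoint constant, producing functions $f_i^j$ trapped in intervals of length $\lambda/2^j$ with $|\tilde V(f_i^j)|\to\infty$, and then extracts a diagonal sequence converging uniformly to a constant function yet with unbounded $\tilde V$-values, contradicting continuity. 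Your argument is direct and constructive: compactness of $\{c\uno:c\in[0,\lambda]\}$ upgrades the pointwise continuity moduli to a uniform tube radius $\epsilon^*$, and then the valuation identity with the equally spaced levels $A_k=k\delta\uno$ telescopes $\tilde V(f)$ into a finite sum of terms each controlled by the tube bound. The paper's bisection is shorter and avoids the explicit compactness-cover step; your approach yields an explicit a priori bound $M_0+K(M^*+M_0)$ depending only on $\lambda$ and the modulus of continuity of $\tilde V$ along the constant curve, which is a modest quantitative advantage.
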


\begin{proof}
We reason by contradiction. If the result is not true, there exists $\lambda>0$ and  a sequence $(f_i)_{i\in \mathbb N}\subset C(S^{n-1})^+$, with $\|f_i\|_\infty\leq \lambda$ for every $i\in \mathbb N$ and such that $|\tilde{V}(f_i)|\rightarrow +\infty$.

Consider the function $$\theta:\mathbb R^+\longrightarrow \mathbb R$$ defined by $$\theta(c)=\tilde{V}(c \uno).$$ The continuity of $\tilde{V}$ implies that $\theta$ is continuous. Therefore, $\theta$ is uniformly continuous on $[0,\lambda]$. In particular, it is bounded on that interval. Therefore, there exists $M>0$ such that, for every $c\in [0,\lambda]$, $$|\tilde{V}(c \uno)|\leq M.$$

We define  inductively two sequences $(a_j)_{j\in \mathbb N}, (b_j)_{j\in \mathbb N}\subset \mathbb R^+$: Define first $a_0=0$, $b_0=\lambda$. Consider $c_0=\frac{a_0+b_0}{2}$.

We note that  $$\tilde{V}(f_i\vee c_0\uno) +\tilde{V}(f_i\wedge c_0\uno)=\tilde{V}(f_i) + \tilde{V}(c_0\uno).$$

Since $|\tilde{V}(c_0\uno)|\leq M$ and $|\tilde{V}(f_i)|\rightarrow +\infty$, we know that there must exist an infinite set $\mathbb M_1\subset \mathbb N$ such that for $i\in \mathbb M_1$ either $|\tilde{V}(f_i\vee c_0\uno)|\rightarrow +\infty$ or $|\tilde{V}(f_i\wedge c_0\uno)|\rightarrow +\infty$ as $i$ grows to $\infty$. In the first case, we set $a_1=c_0$, $b_1=\lambda$ and $f^1_i=f_i\vee c_0\uno$. In the second case, we set $a_1=0$ and $b_1=c_0$ and $f^1_i=f_i\wedge c_0\uno$. Now we define $c_1=\frac{a_1+ b_1}{2}$ and proceed similarly.

Inductively, we construct two sequences $(a_j), (b_j)\subset \mathbb R^+$, a decreasing sequence of infinite subsets $\mathbb M_j\subset \mathbb N$, and sequences $(f^j_i)_{i\in \mathbb M_j}\subset C(S^{n-1})^+$ such that, for every $j\in\mathbb N$,
$$
|a_j-b_j|=\frac{\lambda}{2^j},
$$
and for every $i\in\mathbb M_j$, for every $t\in S^{n-1}$,
$$
a_j\leq f^j_i(t)\leq b_j,
$$
and with the property that
$$
\lim_{ i\rightarrow \infty} |\tilde{V}(f^j_i)|=+\infty.
$$
Passing to a further subsequence we may assume without loss of generality that, for every $i\in \mathbb N$,  $$|\tilde{V}(f_i^i)|\geq i.$$

Call $d=\lim_i a_i$. If we consider now the sequence $(f_i^i)_{i\in \mathbb N}\subset C(S^{n-1})^+$, we have that $$\|f_i^i-d\uno\|_\infty \rightarrow 0$$ but $$|\tilde{V}(f_i^i)|\geq i,$$
in contradiction to the continuity of $\tilde{V}$ at $d\uno$.

\end{proof}

We thank the anonymous referee of \cite{Vi} for suggesting a procedure very similar to this as an alternative reasoning to show a statement in that paper.

\smallskip

In the rest of this note we will repeatedly use the fact that $S^{n-1}$ is a compact metric space. We will write $d$ to denote the euclidean metric in $S^{n-1}$.

We need to define an additional concept for our next result:

Given any set $A\subset S^{n-1}$, and $\omega>0$, let us consider the {\em outer parallel band} around $A$ defined by
$$
A_\omega=\{t\in S^{n-1} : 0<d(t, A)<\omega\}.
$$
Note that, for every $A\subset S^{n-1}$ and $\omega>0$, $A_\omega$ is an open set.

In our next result we use the fact that $V$ is bounded on bounded sets to  control  $V$ on these bands.  In  \cite{FK} these outer parallel bands are called {\em rims}, and they are used for similar purposes to ours.

\begin{lema}\label{rims}
Let $V:\mathcal S^n_0\rightarrow \mathbb R$ be a radial continuous valuation. Let $A\subset S^{n-1}$ be any set and $\lambda\in \mathbb R^+$.
$$
\lim_{\omega\rightarrow 0} \sup\{|\tilde{V}(f)|: \, f\prec A_\omega, \, \|f\|_\infty\leq \lambda\}=0.
$$
\end{lema}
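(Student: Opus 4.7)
The plan is to argue by contradiction, producing a sequence of functions with pairwise disjoint supports on which $\tilde V$ takes values growing without bound, contradicting Lemma \ref{l:bbs}. Set $F(\omega):=\sup\{|\tilde V(f)|:f\prec A_\omega,\ \|f\|_\infty\leq\lambda\}$. Since $A_{\omega'}\subset A_\omega$ whenever $\omega'<\omega$, the function $F$ is non-decreasing, so $\lim_{\omega\to 0^+}F(\omega)=\inf_{\omega>0}F(\omega)$; if this infimum is not $0$ there is $\delta>0$ with $F(\omega)\geq\delta$ for every $\omega>0$. (Note that $0\prec A_\omega$, so $|V(\{0\})|$ is always in this supremum; the lemma thus tacitly uses $V(\{0\})=0$, the hypothesis of Theorem \ref{main}, which I adopt.) Decomposing $F=\max(F^+,F^-)$ with $F^\pm(\omega):=\sup\{\pm\tilde V(f):f\prec A_\omega,\ \|f\|_\infty\leq\lambda\}$, both non-decreasing, one of them stays $\geq\delta$ on all of $(0,\infty)$: if $\{\omega:F^+(\omega)\geq\delta\}$ has infimum $0$, monotonicity forces $F^+\geq\delta$ everywhere, and otherwise the complementary interval together with monotonicity forces $F^-\geq\delta$ everywhere. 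Without loss of generality assume $F^+(\omega)\geq\delta$ for every $\omega>0$.

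I next build inductively a sequence $(f_j)_{j\geq 1}$ with pairwise disjoint compact supports, $\tilde V(f_j)\geq\delta/2$, and $\|f_j\|_\infty\leq\lambda$. Start by picking any $f_1\prec A_{\omega_1}$ with the prescribed bounds; its support $K_1=\supp(f_1)$ is a compact subset of $A_{\omega_1}$, so the continuous function $d(\cdot,A)$ is strictly positive on $K_1$ and attains a minimum $\omega_1^*>0$ there. Suppose $f_1,\ldots,f_j$ have been chosen so that $K_j:=\bigcup_{i\leq j}\supp(f_i)$ satisfies $\omega_j^*:=\min_{K_j}d(\cdot,A)>0$. Pick any $\omega_{j+1}\in(0,\omega_j^*)$ and, using $F^+(\omega_{j+1})\geq\delta$, a function $f_{j+1}\prec A_{\omega_{j+1}}$ with $\tilde V(f_{j+1})\geq\delta/2$ and $\|f_{j+1}\|_\infty\leq\lambda$. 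Any $t\in K_j$ has $d(t,A)\geq\omega_j^*>\omega_{j+1}$, hence $t\notin A_{\omega_{j+1}}$; so $\supp(f_{j+1})\subset A_{\omega_{j+1}}$ is disjoint from $K_j$, closing the induction.

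Since the supports are pairwise disjoint, $f_i\wedge f_j\equiv 0$ for $i\ne j$, and a straightforward induction from the identity $\tilde V(g\vee h)+\tilde V(g\wedge h)=\tilde V(g)+\tilde V(h)$ together with $\tilde V(0)=0$ gives
$$\tilde V\Bigl(\bigvee_{j=1}^{N}f_j\Bigr)=\sum_{j=1}^{N}\tilde V(f_j)\geq N\frac{\delta}{2}.$$
On the other hand $\|\bigvee_{j=1}^{N}f_j\|_\infty=\max_j\|f_j\|_\infty\leq\lambda$, so Lemma \ref{l:bbs} supplies a bound $M(\lambda)$ on $|\tilde V(\bigvee_{j=1}^{N}f_j)|$ independent of $N$, which is contradicted as soon as $N>2M(\lambda)/\delta$. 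The principal obstacle is the inductive step: because $\supp(f_j)$ is a compact subset of the open band $A_{\omega_j}$, it must sit at strictly positive distance from $A$, leaving room inside a thinner band $A_{\omega_{j+1}}$ in which $F^+$ still provides test functions of large valuation.
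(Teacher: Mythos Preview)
Your proof is correct and follows the same overall strategy as the paper's: argue by contradiction, manufacture a sequence of functions with pairwise disjoint supports on which $\tilde V$ is bounded away from zero, and contradict Lemma \ref{l:bbs}. The difference lies in the inductive step. Having chosen $f_1\prec A_{\omega_1}$, the paper invokes continuity of $\tilde V$ at $f_1$, uniform continuity of $f_1$, and Urysohn's lemma to replace $f_1$ by a trimmed version $\psi_1 f_1$ whose support avoids a neighborhood of $A$; only after this trimming can the next band be chosen thin enough to miss $\supp(\psi_1 f_1)$. You bypass all of this by observing that $\supp(f_1)$ is already a compact subset of the open band $A_{\omega_1}$, so the continuous function $d(\cdot,A)$ attains a strictly positive minimum $\omega_1^*$ there, and any band $A_{\omega_2}$ with $\omega_2<\omega_1^*$ is automatically disjoint from $\supp(f_1)$. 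This is a genuine simplification: no Urysohn, and no use of the continuity of $\tilde V$ beyond what is already packaged into Lemma \ref{l:bbs}. Your side remark that the lemma tacitly requires $\tilde V(0)=0$ is also correct (the zero function competes in the supremum, and the identity $\tilde V(\bigvee_j f_j)=\sum_j\tilde V(f_j)$ for disjointly supported $f_j$ uses $\tilde V(0)=0$); the paper's proof relies on this implicitly as well.
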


\begin{proof}
We reason by contradiction. Suppose the result is not true. Then there exist $A\subset S^{n-1}$, $\lambda \in \mathbb R^+$, $\epsilon>0$, a sequence $(\omega_i)_{i\in \mathbb N}\subset \mathbb R$  and a sequence $(f_i)_{i\in \mathbb N}\subset C(S^{n-1})^+$ such that $\lim_{i\rightarrow \mathbb N} \omega_i=0$ and, for every $i\in \mathbb N$,

\begin{itemize}
\item   $\omega_i>0$
\item  $f_i\prec A_{\omega_i}$
\item $\|f_i\|_\infty\leq \lambda$
\item $|\tilde{V}(f_i)|\geq \epsilon.$

\end{itemize}

Therefore, there exists an infinite subset $I\subset \mathbb N$ such that either  $\tilde{V}(f_i) >\epsilon$ for every $i\in I$ or $\tilde{V}(f_i) <\epsilon$ for every $i\in I$. So, we assume without loss of generality that $\tilde{V}(f_i) >\epsilon$ for every $i\in I$. The case $\tilde{V}(f_i) <\epsilon$ is totally analogous.

Consider $f_1$. Using the continuity of $\tilde{V}$ at $f_1$, we get the existence of $\delta>0$ such that for every  $g\in C(S^{n-1})^+$ with $\|f-g\|_\infty<\delta$,
$$
|\tilde{V}(f)-\tilde{V}(g)|\leq \frac{\epsilon}{2}.
$$

Since $f_1$ is uniformly continuous and $f_1(t)=0$ for every $t\in A\subset S^{n-1}\backslash A_{\omega_1}$, there exists $0<\rho<\omega_1$ such that, for every $t\in S^{n-1}$ with $d(t, A)<\rho$, $f_1(t)<\delta$.
We consider the disjoint closed sets
$$
C_1=\{t\in S^{n-1} : d(t, A)\leq \frac{\rho}{2}\}
$$
and
$$
C_2=f_1^{-1}\left([\delta, \lambda]\right).
$$
By Urysohn's Lemma, we can consider a continuous function $\psi_1$ with $\psi_{1|_{C_1}}=0$, $\psi_{1|_{C_2}}=1$ and  $0\leq \psi_1(t)\leq 1$ for every $t\in S^{n-1}$. We consider now the function $\psi_1 f_1\in C(S^{n-1})^+$. On the one hand, $\|f_1-\psi_1 f_1\|_\infty\leq \delta$ and, therefore, $$|\tilde{V}(\psi_1 f_1)|\geq \left||\tilde{V}(f_1)|-|\tilde{V}(f_1)-\tilde{V}(\psi_1 f_1)|\right|> \epsilon-\frac{\epsilon}{2}=\frac{\epsilon}{2}.$$

On the other hand,  $\psi_1 f_1\prec A_{\omega_1}\backslash A_{\frac{\rho}{2}}$. Now, we can choose $\omega_{i_2}< \frac{\rho}{2}$ and we can reason similarly as above with the function $f_{i_2}$.

Inductively, we construct a sequence of functions $(\psi_j f_{i_j})_{j\in \mathbb N}\subset C(S^{n-1})^+$ with disjoint support such that $\tilde{V}(\psi_j f_{i_j})>\frac{\epsilon}{2}$. Noting that
$$
\tilde{V}\left(\bigvee_j \psi_j f_{i_j}\right)=\sum_j \tilde{V}(\psi_j f_{i_j}),
$$
and that
$$
\|\bigvee_j \psi_j f_{i_j}\|_\infty\leq\lambda,
$$
we get a contradiction with the fact that $V$ is bounded on bounded sets.
\end{proof}

Now we can prove Theorem \ref{main}.

\begin{proof}[Proof of Theorem \ref{main}]
Let $V:\mathcal S_0^n\longrightarrow \mathbb R$ be as in the hypothesis. For every $f\in C(S^{n-1})^+$, we define
$$
\tilde{V}^+(f)=\sup\{\tilde{V}(g): \, 0\leq g \leq f\},
$$
and we consider the application $\tilde{V}:\mathcal S_0^n\longrightarrow \mathbb R$ defined by
$V^+(K)=\tilde{V}^+(\rho_K)$.

Assume for the moment that $V^+$ is a radial continuous valuation. In that case, the result follows easily:

First we note that it follows from $\tilde V(0)=0$ that $V^+(\{0\})=0$ and that, for every $f\in C(S^{n-1})^+$, one has   $\tilde V^+(f)\geq 0$. Therefore,  $V^+(K)\geq 0$ for every $K\in \mathcal S_0^n$.

We define next $V^-=V^+-V$. Clearly, $V^-$ is a radial continuous valuation and $V^-(\{0\})=0$.

By the definition of $ V^+$, it follows that, for every $K\in \mathcal S_0^n$, one has $V(K)\leq  V^+(K)$. Thus, $V^-(K)\geq 0$.

Now, trivially $$V=V^+-V^-.$$

In addition, if $V$ is rotationally invariant, then clearly $V^+$ and $V^-$ are so.

\smallskip

Therefore, we will finish if we show that
$V^+$ is  a radial continuous valuation. Let us prove it.

First, we see that  it is a valuation. Let $f_1, f_2\in C(S^{n-1})^+$. We have to check that
\begin{equation}\label{igualdad}
\tilde{V}^+(f_1\vee f_2)+ \tilde{V}^+(f_1\wedge f_2) = \tilde{V}^+(f_1)+ \tilde{V}^+(f_2).
\end{equation}

Fix $\epsilon>0$. We choose $0\leq g_1\leq f_1$ such that $\tilde{V}^+(f_1)\leq \tilde{V}(g_1)+\epsilon$, and $0\leq g_2\leq f_2$ such that $\tilde{V}^+(f_2)\leq \tilde{V}(g_2)+\epsilon$.

Then, $$\tilde{V}^+(f_1)+ \tilde{V}^+(f_2)\leq \tilde{V}(g_1)+ \tilde{V}(g_2)+ 2\epsilon = \tilde{V}(g_1\vee g_2) + \tilde{V}(g_1\wedge g_2) + 2\epsilon\leq $$ $$\leq \tilde{V}^+(f_1\vee f_2) + \tilde{V}^+(f_1\wedge f_2) + 2\epsilon, $$
where the last inequality follows from the fact that $0\leq g_1\vee g_2\leq f_1\vee f_2$, $0\leq g_1\wedge  g_2\leq f_1\wedge f_2$. Since $\epsilon>0$ was arbitrary, this proves one of the inequalities in \eqref{igualdad}.

For the other one, fix again $\epsilon>0$. We choose $0\leq g \leq f_1\vee f_2 $ such that $\tilde{V}^+(f_1\vee f_2)\leq \tilde{V}(g)+\epsilon$, and $0\leq h \leq f_1\wedge f_2 $ such that $\tilde{V}^+(f_1\wedge f_2)\leq \tilde{V}(h)+\epsilon$.

Let us consider the sets
$$
A=\{ t\in S^{n-1} : f_1(t)\geq f_2(t)\}
$$
and
$$
B=\{ t\in S^{n-1} : f_1(t)< f_2(t)\}.
$$

Let $\lambda=\|f_1\vee f_2\|_\infty$. According to Lemma \ref{rims}, there exists $\omega_1>0$ such that, for every $f\prec A_{\omega_1}$ with $\|f\|_\infty\leq\lambda$ we have $|\tilde{V}(f)|\leq \epsilon$.

Since $\tilde{V}$ is continuous at $g$, there exists $\delta>0$ such that, $|\tilde{V}(g)-\tilde{V}(g')|<\epsilon$ for every $g'$ such that $\|g-g'\|_\infty<  \delta$. We define $g'=(g-\frac{\delta}{2})\vee 0$. Then, for every $t\in A$, it follows that
$$
g'(t)=\max\big\{g(t)-\frac{\delta}{2},0\big\}\leq g(t)\leq (f_1\vee f_2)(t)=f_1(t).
$$
Now, we can apply the uniform continuity of $g'$ and $f_1$  to find $\omega_2$ such that for every $t,s\in S^{n-1}$, if $|t-s|<\omega_2$, then $|f_1(t)-f_1(s)|<\delta/4$ and  $|g'(t)-g'(s)|<\delta/4$. In particular, this implies that for every $t\in A_{\omega_2}$, $g'(t)\leq f_1(t)$.

Let $\omega=\min\{\omega_1, \omega_2\}$, and let
$$
J(A, \omega)=A\cup A_\omega
$$
be the open $\omega$-outer parallel of the closed set $A$.
Note that $S^{n-1}=J(A, \omega)\cup B$, where both $J(A, \omega)$ and $B$ are open sets. Moreover, we clearly have $J(A, \omega)\cap B=A_\omega$.

We consider the functions  $\varphi_1\prec J(A,\omega)$, $\varphi_2\prec B$  associated to the decomposition $S^{n-1}=J(A, \omega)\cup B$ by Lemma \ref{split}. Then $\varphi_1\vee \varphi_2=\uno$. Let us define $g'_1=\varphi_1 g'$, $g'_2=\varphi_2 g'$, $h_1=\varphi_1 h$, $h_2=\varphi_2 h$ as in Lemma \ref{split}.

A simple verification yields \begin{itemize}
\item $g'=g'_1\vee g'_2$, $h=h_1\vee h_2$,

\item $g'_1\wedge g'_2\prec A_\omega$, $h_1\wedge h_2\prec A_\omega$,

\item $g'_1\wedge h_2\prec A_\omega$, $h_1\wedge g'_2\prec A_\omega$,

\item $0\leq g'_1\vee h_2 \leq f_1$,

\item $0\leq g'_2\vee h_1\leq  f_2$.

\end{itemize}

Therefore, we get
\begin{eqnarray*}
\tilde{V^+}(f_1\vee f_2)&+& \tilde{V}^+(f_1\wedge f_2) \leq  \tilde{V}(g)+ \tilde{V}(h) + 2\epsilon \leq \tilde{V}(g')+ \tilde{V}(h) + 3\epsilon \\
&=&\tilde{V}(g'_1) + \tilde{V}(g'_2) - \tilde{V}(g'_1\wedge g'_2) + \tilde{V}(h_1) + \tilde{V}(h_2) - \tilde{V}(h_1\wedge h_2)+3\epsilon \\
&\leq &\tilde{V}(g'_1) + \tilde{V}(h_2)+ \tilde{V}(g'_2) +\tilde{V}(h_1)+5\epsilon\\
&=&\tilde{V}(g'_1\vee h_2)+ \tilde{V}(g'_1\wedge h_2)+ \tilde{V}(g'_2\vee h_1)+\tilde{V}(g'_2\wedge  h_1)+ 5\epsilon\\
&\leq & \tilde{V}(g'_1\vee h_2)+  \tilde{V}(g'_2\vee h_1) + 7\epsilon\leq \tilde{V^+}(f_1)+ \tilde{V}^+(f_2)+7\epsilon.
\end{eqnarray*}
Again, since $\epsilon>0$ was arbitrary, this finishes the proof of \eqref{igualdad}.

\smallskip

Let us see now that $\tilde{V}^+$ is continuous. Let us consider $f_0\in C(S^{n-1})^+$ and take $\epsilon>0$. There exists $g_0\in C(S^{n-1})^+$ with $0\leq g_0 \leq f_0$ such that $\tilde{V}^+(f_0)\leq \tilde{V}(g_0)+\epsilon$.

Since $\tilde{V}$ is continuous at $f_0$ and $g_0$,  there exists $\delta>0$ such that for every $f, g \in C(S^{n-1})^+$ with $\|f_0-f\|_\infty<\delta$ and $\|g_0-g\|<\delta$, we have  $|\tilde{V}(f_0)-\tilde{V}(f)|<\epsilon$ and  $|\tilde{V}(g_0)-\tilde{V}(g)|<\epsilon$.

Let now $f\in C(S^{n-1})^+$ be such that $\|f_0-f\|_\infty<\delta$. Pick $g\in C(S^{n-1})^+$ with $0\leq g \leq f$ such that $\tilde{V}^+(f)\leq \tilde{V}(g) + \epsilon$.

Note that  $\|g_0\wedge f - g_0\|<\delta$ and $\|g\vee f_0 - f_0\|<\delta$. Then, we have
$$
\tilde{V}^+(f)\geq \tilde{V}(g_0\wedge f) \geq \tilde{V}(g_0)-\epsilon\geq \tilde{V}^+(f_0)-2\epsilon,
$$
and
\begin{eqnarray*}
\tilde{V}^+(f)&\leq& \tilde{V}(g) + \epsilon =\tilde{V}(g\wedge f_0) + \tilde{V}(g\vee f_0) -  \tilde{V}(f_0) +\epsilon \\
&\leq& \tilde{V}(g\wedge f_0) + |\tilde{V}(g\vee f_0) -  \tilde{V}(f_0) |+ \epsilon \leq \tilde{V}^+(f_0) + 2\epsilon.
\end{eqnarray*}

Hence, $$|\tilde{V}^+(f_0)-\tilde{V}^+(f)|<2\epsilon$$ and $\tilde{V}^+$ is continuous as claimed.

\end{proof}

\begin{remark}
The same proof shows that every continuous ``valuation'' $\tilde{V}:C(K)\longrightarrow \mathbb R$, where $K$ is a metrizable compact space, can be written as a difference of two positive continuous valuations.
\end{remark}

The proof of Corollary \ref{representacion} is now immediate.

\begin{proof}[Proof of Corollary \ref{representacion}]
Let $V:\mathcal S_0^n\longrightarrow \mathbb R$ be a rotationally invariant radial continuous valuation. We decompose it as $V=V^+-V^-$ as in Theorem \ref{main}. According to \cite[Theorem 1.1]{Vi}, there exist two continuous functions  $\theta^+, \theta^-:[0,\infty) \longrightarrow \mathbb R$ such that, for every $K\in \mathcal S_0^n$,
$$V(K)=V^+(K)-V^-(K) =\int_{S^{n-1}} \theta^+(\rho_K(t)) dm(t)- \int_{S^{n-1}} \theta^-(\rho_K(t)) dm(t).$$

We define now $\theta=\theta^+-\theta^-$ and the first part of the result follows.

The converse statement had already been proved in \cite[Theorem 1.1]{Vi} (for that implication, the positivity is not needed).
\end{proof}

\end{document}